\documentclass[11pt,a4paper]{article}

\usepackage[T1]{fontenc}
\usepackage[utf8]{inputenc}
\usepackage{lmodern}
\usepackage{microtype}
\usepackage[a4paper,margin=29mm]{geometry}
\usepackage{amsmath,amssymb,amsthm,mathtools}
\usepackage{xcolor}
\usepackage[colorlinks=true,
  linkcolor=blue!55!black,
  citecolor=blue!55!black,
  urlcolor=blue!55!black]{hyperref}

\hypersetup{
  pdftitle={The N-Prime Graph Question is equivalent to the Prime Graph Question},
  pdfauthor={Brecht Verbeken},
  pdfsubject={Integral group rings, N-prime graph, Prime Graph Question}
}

\newtheorem{theorem}{Theorem}[section]
\newtheorem{proposition}[theorem]{Proposition}
\newtheorem{lemma}[theorem]{Lemma}
\newtheorem{corollary}[theorem]{Corollary}
\theoremstyle{remark}
\newtheorem{remark}[theorem]{Remark}

\newcommand{\VZG}{V(\mathbb ZG)}
\newcommand{\GN}{\Gamma_{\mathrm N}}
\newcommand{\GGK}{\Gamma_{\mathrm{GK}}}
\newcommand{\Aut}{\operatorname{Aut}}
\newcommand{\Sym}{\operatorname{Sym}}
\newcommand{\eps}{\varepsilon}

\title{The $N$-Prime Graph Question\\
is equivalent to the Prime Graph Question}
\author{Brecht Verbeken\thanks{Department of Business Technology and Operations,
Data Analytics Laboratory, Vrije Universiteit Brussel (VUB), Pleinlaan 2,
1050 Brussels, Belgium; and imec-SMIT, Vrije Universiteit Brussel,
Pleinlaan 9, 1050 Brussels, Belgium. Corresponding author:
\href{mailto:brecht.verbeken@vub.be}{\texttt{brecht.verbeken@vub.be}}.}}
\date{}

\begin{document}
\maketitle

\begin{abstract}
Let $G$ be a finite group and let $V(\mathbb ZG)$ be the group of
normalized units of its integral group ring. We prove that every $N$-prime
arc of $V(\mathbb ZG)$ either already occurs in $G$ or admits commuting
witnesses of distinct prime orders. Writing $A(\Delta)$ for the arc set
of a directed graph $\Delta$, $E(\Delta)$ for the edge set of an
undirected graph, and $\operatorname{Sym}(E)$ for the two orientations of
the edges in $E$, this is equivalent to the exact formula
\[
 A\bigl(\Gamma_{\mathrm N}(V(\mathbb ZG))\bigr)
 =
 A\bigl(\Gamma_{\mathrm N}(G)\bigr)
 \cup
 \operatorname{Sym}\!\bigl(
 E(\Gamma_{\mathrm{GK}}(V(\mathbb ZG)))
 \bigr).
\]
Consequently, the $N$-Prime Graph Question has an affirmative answer for
$G$ if and only if the Prime Graph Question does.
\end{abstract}

\medskip
\noindent\textit{2020 Mathematics Subject Classification.}
Primary 16S34; Secondary 16U60, 20C05, 20D60.

\smallskip
\noindent\textit{Keywords.}
Integral group ring; normalized unit; $N$-prime graph;
Gruenberg--Kegel graph; Prime Graph Question; partial augmentation.

\section{Introduction}

Let $G$ be a finite group, let $\eps\colon\mathbb ZG\to\mathbb Z$ be the
augmentation map, and write
\[
 \VZG=\{u\in U(\mathbb ZG):\eps(u)=1\}.
\]
For a group $X$, let $\pi(X)$ be the set of primes that occur as orders of
torsion elements of $X$. The Gruenberg--Kegel graph $\GGK(X)$ is the simple
undirected graph with vertex set $\pi(X)$ in which distinct vertices $p$ and
$q$ are adjacent if $X$ contains an element of order $pq$. Kimmerle
\cite{Kimmerle2006} introduced the Prime Graph Question (PQ), which asks
whether $\GGK(\VZG)=\GGK(G)$ for every finite group $G$. Kimmerle and
Konovalov reduced PQ to almost simple groups \cite{KimmerleKonovalov}; see
also the survey \cite{MargolisDelRioSurvey}.

Pacifici, del R\'io and Vergani \cite{PRV} introduced the $N$-prime graph
$\GN(X)$ as the simple directed graph with vertex set $\pi(X)$ in which,
for distinct vertices $p$ and $q$, the arc $q\to p$ occurs if some subgroup
$P$ of $X$ of order $p$ has a normalizer containing an element of order
$q$. They formulated the $N$-Prime Graph Question (NPQ), asking whether
$\GN(\VZG)=\GN(G)$, as a strengthening of PQ. They also proved NPQ for
solvable groups and reduced it to almost simple groups
\cite[Theorem~A and Corollary~B]{PRV}. As an invariant of groups, the
$N$-prime graph is strictly finer than the Gruenberg--Kegel graph: its
double arrows determine the latter graph, whereas its single arrows
contain additional normalizer information \cite[Remark~3.1]{PRV}.

The surprising point is that this strictly finer invariant produces no
stronger equality question for integral group rings:
\[
 \GN(\VZG)=\GN(G)
 \quad\Longleftrightarrow\quad
 \GGK(\VZG)=\GGK(G).
\]
The reason is a dichotomy for $N$-prime arcs in $\VZG$. If the normalizing
element centralizes the subgroup of prime order, the two units commute and
give an element of order $pq$. If the action is nontrivial, a
partial-augmentation orbit argument transfers the corresponding $N$-prime
arc to $G$. This dichotomy gives not only the equivalence but also an exact
formula for the arc set. We shall also use
\[
 \pi(\VZG)=\pi(G).
\]
Indeed, if $p\in\pi(\VZG)$, then $p\mid\exp(G)$ by
\cite[Corollary~4.1]{CohnLivingstone}, and hence $p\in\pi(G)$. The reverse
inclusion follows from $G\leq\VZG$. Thus all graphs in the following
theorem have a common vertex set. For a directed graph $\Delta$, write
$A(\Delta)$ for its arc set, and for an undirected graph $\Delta$, write
$E(\Delta)$ for its edge set. For an undirected edge set $E$, put
\[
 \Sym(E)=\{(p,q),(q,p):\{p,q\}\in E\}.
\]

\begin{theorem}[Exact arc formula]\label{thm:main}
Let $G$ be a finite group. Then
\begin{equation}\label{eq:identity}
 A\bigl(\GN(\VZG)\bigr)
 =
 A\bigl(\GN(G)\bigr)
 \cup
 \Sym\!\bigl(E(\GGK(\VZG))\bigr).
\end{equation}
Consequently,
\begin{equation}\label{eq:equivalence}
 \GN(\VZG)=\GN(G)
 \quad\Longleftrightarrow\quad
 \GGK(\VZG)=\GGK(G).
\end{equation}
\end{theorem}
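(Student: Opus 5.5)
We prove the inclusion ``$\supseteq$'' first, then ``$\subseteq$'' via a dichotomy on the action of the normalizing element, and finally deduce \eqref{eq:equivalence}.

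\textbf{Inclusion $\supseteq$.} We have $A(\GN(G))\subseteq A(\GN(\VZG))$ because $G\le\VZG$: a subgroup $P\le G$ of order $p$ normalized by $g\in G$ of order $q$ is also such a subgroup of $\VZG$. For an edge $\{p,q\}$ of $\GGK(\VZG)$, take $w\in\VZG$ of order $pq$. Then $P=\langle w^q\rangle$ has order $p$, and $w^p$ has order $q$ and centralizes $P$. Hence $q\to p$ is an arc, and symmetrically so is $p\to q$.

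\textbf{Inclusion $\subseteq$.} Let $q\to p$ be an arc of $\GN(\VZG)$. Then there are $u\in\VZG$ of order $p$ and $v\in\VZG$ of order $q$ with $vuv^{-1}=u^k$. Since $v^q=1$, we get $k^q\equiv 1\pmod p$.

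\emph{Trivial action} ($k\equiv1$). Here $u$ and $v$ commute with coprime orders, so $uv$ has order $pq$. Thus $\{p,q\}\in E(\GGK(\VZG))$ and $(q,p)\in\Sym(E(\GGK(\VZG)))$.

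\emph{Nontrivial action.} Here $k$ has order $q$ in $(\mathbb Z/p)^\times$, so $q\mid p-1$, and $u$ is conjugate in $\VZG$ to $u^k,u^{k^2},\dots$. The goal is to produce an element of order $p$ in $G$ whose normalizer in $G$ contains an element of order $q$. We use partial augmentations $\eps_C(u)$ over $G$-conjugacy classes $C$.
\begin{itemize}
\item Conjugate units have equal partial augmentations, so $\eps_C(u)=\eps_C(u^k)$ for all $C$.
\item Since $\sum_C\eps_C(u)=1$, some class $C$ has $\eps_C(u)\neq0$.
\item By the Hertweck/Cohn--Livingstone results, such a $C$ consists of elements of $p$-power order, indeed of order $p$ since $u$ has order $p$ (Hertweck: $\eps_C(u)\ne0$ implies the order of elements of $C$ divides $o(u)$).
\end{itemize}
The map $\sigma\colon C\mapsto C^{(k)}$, taking the class of $g$ to the class of $g^k$, permutes the $G$-classes of elements of order $p$. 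Moreover $\eps_{C}(u^k)$ can be expressed through $\eps_{\sigma^{-1}C}(u)$, using the standard fact that partial augmentations of $u^m$ for $m$ coprime to $o(u)$ are obtained by this Galois twist (e.g.\ via the Luthar--Passi character argument, or directly since $u\mapsto u^m$ corresponds to a Galois action on $\mathbb Q(\zeta_p)$ characters). Hence $\eps$ restricted to the classes of order-$p$ elements is constant on $\langle\sigma\rangle$-orbits.

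If every orbit had length divisible by $q$, the total sum $\sum_C\eps_C(u)=1$ would be divisible by $q$, which is a contradiction. So some class $C=g^G$ with $\eps_C(u)\neq 0$ satisfies $\sigma(C)=C$ (the orbit length divides the order $q$ of $k$, so it is $1$ or $q$). Then $g^k=xgx^{-1}$ for some $x\in G$, so $x$ normalizes $\langle g\rangle$ and induces the automorphism $g\mapsto g^k$ of order $q$. Thus $N_G(\langle g\rangle)/C_G(g)$ has an element of order $q$, and lifting to a $q$-element of $G$ gives an element of order $q$ in $N_G(\langle g\rangle)$. Hence $(q,p)\in A(\GN(G))$.

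\textbf{Main obstacle.} The key step is justifying the Galois-twist compatibility $\eps_{\sigma(C)}(u^k)=\eps_C(u)$ together with the vanishing of $\eps_C(u)$ on classes of orders other than $p$. For the first, I would try the character-theoretic formula, or the elementary observation that $\tau\colon\sum a_gg\mapsto$ the coefficient sum over $C$ is preserved under the ring map induced on $\mathbb Z G/[\mathbb ZG,\mathbb ZG]$. The fallback is the known lemma $\eps_C(u^m)=\sum_{D:\,D^{(m)}=C}\eps_D(u)$, which holds when $m$ is coprime to $|G|$ modulo suitable adjustments; for $m=k$, we may replace $k$ by a representative coprime to $|G|$ congruent mod $p$ by CRT.

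\textbf{Deducing \eqref{eq:equivalence}.} Double arcs of $\GN(X)$ determine $\GGK(X)$ (\cite[Remark~3.1]{PRV}). If the two $N$-prime graphs are equal, then their double arrows agree and hence the GK graphs agree. Conversely, if $\GGK(\VZG)=\GGK(G)$, then $\Sym(E(\GGK(G)))\subseteq A(\GN(G))$ by the ``$\supseteq$'' argument applied inside $G$, so \eqref{eq:identity} reduces to $A(\GN(\VZG))=A(\GN(G))$.
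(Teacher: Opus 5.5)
Your proposal is correct and follows the paper's proof essentially step for step: the commuting/non-commuting dichotomy, orbitwise constancy of partial augmentations under $C\mapsto C^k$ via the coprime-power formula, the mod-$q$ count forcing a fixed class, and the automizer argument. Your CRT fallback is the right justification of the power formula and is exactly Lemma~\ref{lem:power}, whereas the suggested ``ring map on $\mathbb ZG/[\mathbb ZG,\mathbb ZG]$'' route does not work because $x\mapsto x^k$ is not additive; the only other slip is that Hertweck's divisibility result alone does not exclude the trivial class, so you also need Berman--Higman ($\eps_{\{1\}}(u)=0$) to get $\sum_{C\in\mathcal C_p(G)}\eps_C(u)=1$, as in \eqref{eq:support}.
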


\begin{corollary}\label{cor:counterexample}
If an arc $q\to p$ occurs in $\GN(\VZG)$ but not in $\GN(G)$, then
$\VZG$ contains an element of order $pq$. In particular, $G$ is a
counterexample to the $N$-Prime Graph Question if and only if it is a
counterexample to the Prime Graph Question.
\end{corollary}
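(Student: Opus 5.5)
The corollary should follow directly from Theorem~\ref{thm:main}. I would not go back to partial augmentations; both assertions come from reading the arc formula \eqref{eq:identity} and the equivalence \eqref{eq:equivalence}, together with the equality $\pi(\VZG)=\pi(G)$ noted before the theorem.

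For the first assertion, let $q\to p$ be an arc of $\GN(\VZG)$ that is not an arc of $\GN(G)$, so $(q,p)\in A(\GN(\VZG))\setminus A(\GN(G))$. By \eqref{eq:identity} the pair $(q,p)$ must lie in $\Sym\bigl(E(\GGK(\VZG))\bigr)$. By the definition of $\Sym$, this means $\{p,q\}\in E(\GGK(\VZG))$, since $p\neq q$ because the graphs are simple. By the definition of the Gruenberg--Kegel graph, that edge means exactly that $\VZG$ contains an element of order $pq$.

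For the second assertion, I would first record that both questions compare graphs on the same vertex set. This holds because $\pi(\VZG)=\pi(G)$, so "counterexample" means precisely that the arc set, respectively the edge set, differs. Then $G$ is a counterexample to NPQ iff $\GN(\VZG)\neq\GN(G)$, which by \eqref{eq:equivalence} holds iff $\GGK(\VZG)\neq\GGK(G)$, i.e.\ iff $G$ is a counterexample to PQ.

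To make the link with the first part explicit, I would also note that $G\le\VZG$ gives $A(\GN(G))\subseteq A(\GN(\VZG))$ and $E(\GGK(G))\subseteq E(\GGK(\VZG))$. Hence a failure of NPQ is exactly the existence of an arc as in the first sentence, and this yields an element of order $pq$ in $\VZG$. Such an element cannot exist in $G$: otherwise $(q,p)$ would already be an arc of $\GN(G)$, since an element of order $pq$ has a power of order $q$ centralizing a subgroup of order $p$. So we obtain a PQ failure directly.

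There is no real obstacle here. The only point needing care is the observation that an edge of $\GGK(G)$ yields both orientations of the arc in $\GN(G)$, i.e.\ $\Sym(E(\GGK(G)))\subseteq A(\GN(G))$. This is the elementary fact from \cite[Remark~3.1]{PRV}: if $g\in G$ has order $pq$, then $\langle g^{q}\rangle$ has order $p$ and is centralized, hence normalized, by $g^{p}$, which has order $q$, and symmetrically with $p$ and $q$ interchanged.
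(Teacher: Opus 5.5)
Your proposal is correct and matches the paper, which derives the corollary immediately from Theorem~\ref{thm:main}: an arc outside $A(\GN(G))$ must lie in $\Sym(E(\GGK(\VZG)))$, and the second assertion is the equivalence \eqref{eq:equivalence}. Your extra direct argument that an NPQ failure yields a PQ failure is correct but redundant, since \eqref{eq:equivalence} already gives both directions.
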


\begin{proof}
This follows immediately from Theorem~\ref{thm:main}.
\end{proof}

The equivalence has immediate structural consequences. The almost-simple
reduction for NPQ in \cite[Theorem~A]{PRV} follows formally from the
Kimmerle--Konovalov reduction for PQ \cite{KimmerleKonovalov}, and the
solvable case \cite[Corollary~B]{PRV} follows from the corresponding result
for PQ \cite{Kimmerle2006}. More generally, every affirmative result for PQ
on a class of finite groups automatically yields NPQ on the same class.
These graph-equality consequences do not subsume the stronger
subgroup-isomorphism results of Pacifici, del R\'io and Vergani, which
require a prescribed subgroup isomorphism type.

The closest overlap in the proof is \cite[Lemma~4.2]{PRV}. Their argument
also studies the $r$-power permutation of the conjugacy classes of elements
of order $p$: under hypotheses on characters and Galois transport, equality
of partial augmentations along the $q$-orbits leads, when there is no fixed
class, to an augmentation contradiction. Here the coprime-power identity
for partial augmentations, together with the unit conjugacy $u\sim u^r$,
gives orbitwise constancy directly, without the character-theoretic
hypotheses used there. The augmentation sum forces an $r$-fixed
class; such a class yields an element of order $q$ in the automizer
\[
 N_G(\langle a\rangle)/C_G(a),
\]
and hence forces $q\mid |N_G(\langle a\rangle)|$. Combining this with the
commuting case gives the exact arc formula.

\section{Graph and partial-augmentation preliminaries}

\begin{lemma}[Double-arrow principle]\label{lem:double}
For distinct primes $p,q$ and any group $X$,
\[
 \{p,q\}\in E(\GGK(X))
 \quad\Longleftrightarrow\quad
 (q,p),(p,q)\in A(\GN(X)).
\]
\end{lemma}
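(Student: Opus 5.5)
We prove the two implications separately. Throughout, $X$ is an arbitrary group and $p\neq q$ are primes. We read $p,q$ as vertices of the relevant graphs, so they lie in $\pi(X)$.

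For ($\Rightarrow$), suppose $X$ contains an element $x$ of order $pq$. Put $a=x^{q}$ and $b=x^{p}$. Then $a$ has order $p$, $b$ has order $q$, and $a$ and $b$ commute, being powers of the same element. Let $P=\langle a\rangle$, a subgroup of order $p$. Since $b$ centralizes $a$, it lies in $N_X(P)$, so $N_X(P)$ contains an element of order $q$ and the arc $q\to p$ is in $\GN(X)$. Exchanging roles, $Q=\langle b\rangle$ has order $q$ and $a\in C_X(b)\le N_X(Q)$ has order $p$, so the arc $p\to q$ is in $\GN(X)$ as well. Both orientations $(q,p)$ and $(p,q)$ therefore lie in $A(\GN(X))$.

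For ($\Leftarrow$), the point is that normalizing elements of prime order need not centralize, so a single arc does not directly give commuting elements. Having both arcs is what fixes this. The arc $q\to p$ gives a subgroup $P=\langle a\rangle$ of order $p$ and an element $b$ of order $q$ with $b\in N_X(P)$. The arc $p\to q$ gives a subgroup $Q=\langle d\rangle$ of order $q$ and an element $c$ of order $p$ with $c\in N_X(Q)$. These two configurations come from unrelated subgroups, so we argue inside one of them.

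Consider the action of $b$ on $P$ by conjugation. It yields a homomorphism $\langle b\rangle\to \Aut(P)\cong C_{p-1}$. Since $|\langle b\rangle|=q$, this map is either trivial or injective, and injectivity requires $q\mid p-1$. Symmetrically, $c$ acts on $Q$ through $\langle c\rangle\to\Aut(Q)\cong C_{q-1}$, which is trivial or injective, and injectivity requires $p\mid q-1$. The conditions $q\mid p-1$ and $p\mid q-1$ would give $q<p$ and $p<q$ at once, which is impossible. Hence at least one action is trivial.

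If $b$ centralizes $a$, then $a$ and $b$ are commuting elements of coprime orders $p$ and $q$. Their product $ab$ then has order $pq$, because in the abelian group $\langle a,b\rangle\cong C_p\times C_q$ the element $ab$ has order $\operatorname{lcm}(p,q)=pq$. If instead $c$ centralizes $d$, the same argument shows $cd$ has order $pq$. In either case $\{p,q\}\in E(\GGK(X))$.

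The argument is elementary throughout. The only step needing any care is the case distinction showing that both automorphism actions cannot be faithful simultaneously. No finiteness of $X$ is used, because we only work inside the finite subgroups $P\rtimes\langle b\rangle$ and $Q\rtimes\langle c\rangle$. This matters here, since the lemma will be applied with $X=\VZG$.
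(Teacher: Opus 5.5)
Your proof is correct and follows essentially the same route as the paper. Commuting $p$- and $q$-parts of an element of order $pq$ give both arcs. Conversely, a witness of order $q$ acting nontrivially on a subgroup of order $p$ forces $q\mid p-1$, and $q\mid p-1$ and $p\mid q-1$ cannot both hold, so some witness centralizes and yields an element of order $pq$; the only difference is that you argue directly where the paper argues by contradiction.
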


\begin{proof}
An element of order $pq$ has commuting $p$- and $q$-parts, which give both
arcs. Conversely, suppose that both arcs occur but that $X$ has no element
of order $pq$. A witness for $q\to p$ cannot centralize its subgroup of
order $p$, since commuting elements of coprime orders $p$ and $q$ have a
product of order $pq$. Hence an element of order $q$ acts nontrivially on
$C_p$, so $q$ divides $p-1$. Reversing $p$ and $q$ gives that $p$ divides
$q-1$, which is impossible.
\end{proof}

For $a=\sum_{g\in G}a_g g\in\mathbb ZG$ and a conjugacy class $C$ of $G$,
define the partial augmentation
\[
 \eps_C(a)=\sum_{g\in C}a_g.
\]
Let $\mathcal C_p(G)$ denote the set of conjugacy classes of elements of
order $p$. If $u\in\VZG$ has prime order $p$, the standard support theorem
\cite[Theorem~2.3]{Hertweck}, together with the Berman--Higman theorem
\cite[Proposition~(1.4)]{Sehgal}, gives
\begin{equation}\label{eq:support}
 \eps_C(u)=0\quad(C\notin\mathcal C_p(G)),
 \qquad
 \sum_{C\in\mathcal C_p(G)}\eps_C(u)=1.
\end{equation}

Conjugation by ring units preserves partial augmentations. Indeed, for
$w\in U(\mathbb ZG)$,
\[
 w^{-1}uw-u=[w^{-1}u,w],
\]
where $[a,b]=ab-ba$ denotes an additive commutator, and the coefficient sum
of an additive commutator on each conjugacy class is zero. Hence
\begin{equation}\label{eq:conjugation}
 \eps_C(w^{-1}uw)=\eps_C(u).
\end{equation}
For a conjugacy class $C$ and an integer $k$, write $C^k$ for the class
containing the $k$th powers of the elements of $C$.

\begin{lemma}\label{lem:power}
Let $u\in\VZG$ have order $p$, where $p$ is prime, and let $r$ be coprime
to $p$. Then
\[
 \eps_C(u^r)
 =
 \eps_{C^{r^{-1}}}(u)
 \qquad
 (C\in\mathcal C_p(G)),
\]
where $r^{-1}$ is taken modulo $p$.
\end{lemma}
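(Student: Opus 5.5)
The plan is to compare character values. Ordinary characters of $G$ see $u$ only through its partial augmentations, and raising to a coprime power acts on these values as a Galois automorphism. The column-independence of the character table then lets us read off the partial augmentations of $u^r$.

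Let $\chi$ be an ordinary irreducible character of $G$, extended linearly to $\mathbb ZG$ (equivalently, the trace of a complex representation $\rho$ extended to $\mathbb ZG$). For $a=\sum a_g g$ we have $\chi(a)=\sum_g a_g\chi(g)=\sum_{C}\eps_C(a)\chi(g_C)$, where $g_C$ is a representative of $C$, since $\chi$ is constant on classes. Now apply this to $u$ and to $u^r$, both of which have order $p$ because $\gcd(r,p)=1$.

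\begin{itemize}
\item By \eqref{eq:support}, only classes in $\mathcal C_p(G)$ contribute, so
\[
\chi(u)=\sum_{C\in\mathcal C_p(G)}\eps_C(u)\chi(g_C),
\qquad
\chi(u^r)=\sum_{C\in\mathcal C_p(G)}\eps_C(u^r)\chi(g_C).
\]
\item Since $\rho(u)^p=1$, the matrix $\rho(u)$ is diagonalizable with eigenvalues $p$-th roots of unity. Hence $\chi(u)\in\mathbb Q(\zeta_p)$, and $\chi(u^r)=\sigma_r(\chi(u))$, where $\sigma_r\in\mathrm{Gal}(\mathbb Q(\zeta_p)/\mathbb Q)$ sends $\zeta_p\mapsto\zeta_p^r$.
\item For each $C\in\mathcal C_p(G)$ the same reasoning gives $\chi(g_C)\in\mathbb Q(\zeta_p)$ and $\sigma_r(\chi(g_C))=\chi(g_C^r)$.
\end{itemize}

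Applying $\sigma_r$ to the first expression, and using that the $\eps_C(u)$ are rational integers fixed by $\sigma_r$, gives
\[
\chi(u^r)=\sum_{C\in\mathcal C_p(G)}\eps_C(u)\,\chi(g_C^r).
\]
The map $C\mapsto C^r$ is a permutation of $\mathcal C_p(G)$ whose inverse is $C\mapsto C^{r^{-1}}$, with $r^{-1}$ taken modulo $p$. Reindexing by $D=C^r$ therefore yields
\[
\chi(u^r)=\sum_{D\in\mathcal C_p(G)}\eps_{D^{r^{-1}}}(u)\,\chi(g_D).
\]

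Subtracting this from the second expression for $\chi(u^r)$ gives, for every irreducible $\chi$,
\[
\sum_{D\in\mathcal C_p(G)}\bigl(\eps_D(u^r)-\eps_{D^{r^{-1}}}(u)\bigr)\chi(g_D)=0.
\]
The columns of the character table are linearly independent; this is column orthogonality. Hence every coefficient vanishes, which is exactly the claim.

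The only step needing care is the Galois step. One must check that $\sigma_r$ acts compatibly on both sides, namely on $\chi(u)$ via the eigenvalues of $\rho(u)$ and on each $\chi(g_C)$ via the eigenvalues of $\rho(g_C)$. This works precisely because, by \eqref{eq:support}, all relevant elements have order $p$, so a single automorphism of $\mathbb Q(\zeta_p)$ realizes the $r$-th power map simultaneously on $u$ and on every $g_C$.
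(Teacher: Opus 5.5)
Your proof is correct, but it does not follow the paper's route. The paper does no character theory here. It quotes a general power formula from the literature, $\eps_C(u^r)=\sum_{D^r=C}\eps_D(u)$, summing over conjugacy classes $D$ of $G$. Only afterwards does it invoke \eqref{eq:support}, to discard every $D\notin\mathcal C_p(G)$. It then uses that $D\mapsto D^r$ permutes $\mathcal C_p(G)$ with inverse $D\mapsto D^s$, where $rs\equiv1\pmod p$, to collapse the sum to the single term $\eps_{C^s}(u)$.

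You instead prove the identity directly. You apply \eqref{eq:support} \emph{before} the Galois step, so every relevant character value lies in $\mathbb Q(\zeta_p)$. A single automorphism $\sigma_r$ then realizes the $r$th power map on $\rho(u)$ and on all $\rho(g_C)$ at once, and invertibility of the character table lets you compare coefficients. All your steps check out: $\rho$ is a ring homomorphism on $\mathbb ZG$, $\rho(u)$ is diagonalizable of exponent $p$, and the reindexing $D=C^r$ is valid.

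Your version is self-contained and shows exactly why the support theorem is indispensable. Without it, an extension of $\sigma_r$ to the values $\chi(g_C)$ would act as an $r'$th power map, with $r'\equiv r\pmod p$ coprime to $|G|$. Such a map can move classes outside $\mathcal C_p(G)$ differently from $C\mapsto C^r$. The paper's version is shorter and delegates that issue to the cited general formula. There the prime-order hypothesis enters only through \eqref{eq:support} and the final combinatorial collapse.
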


\begin{proof}
Choose an integer $s$ such that $rs\equiv1\pmod p$. By
\cite[Lemma~3.2(a)]{PAP},
\[
 \eps_C(u^r)
 =
 \sum_{\substack{D\ \text{a conjugacy class of }G\\ D^r=C}}
 \eps_D(u).
\]
By \eqref{eq:support}, only classes in $\mathcal C_p(G)$ contribute.
Because $\gcd(r,p)=1$, the map $D\mapsto D^r$ is a permutation of
$\mathcal C_p(G)$ with inverse $D\mapsto D^s$. Hence the only contributing
class is $C^s$, and the asserted identity follows.
\end{proof}

\section{Transfer of nontrivial \texorpdfstring{$N$}{N}-prime arcs}

For a prime $p$ and an integer $r$ coprime to $p$, let
$\operatorname{ord}_p(r)$ denote the multiplicative order of $r$ modulo
$p$.

\begin{proposition}[Transfer of nontrivial arcs]\label{prop:transfer}
Let $p$ and $q$ be distinct primes. Suppose that $u,v\in\VZG$ satisfy
\[
 |u|=p,\qquad |v|=q,\qquad v^{-1}uv=u^r,
 \qquad \operatorname{ord}_p(r)=q.
\]
Then the map $C\mapsto C^r$ fixes a conjugacy class of elements of order
$p$ in $G$. Consequently, $q\to p$ is an arc of $\GN(G)$.
\end{proposition}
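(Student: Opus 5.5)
Since $v^{-1}uv=u^r$, the units $u$ and $u^r$ are conjugate in $U(\mathbb ZG)$, so by \eqref{eq:conjugation} we get $\eps_C(u^r)=\eps_C(u)$ for every class $C$. Combined with Lemma~\ref{lem:power} (applied with $r$, which is coprime to $p$ because $\operatorname{ord}_p(r)$ is defined), this yields
\[
 \eps_C(u)=\eps_{C^{s}}(u)\qquad(C\in\mathcal C_p(G)),
\]
where $s$ is an inverse of $r$ modulo $p$. Iterating, $\eps_C(u)=\eps_D(u)$ whenever $C$ and $D$ lie in the same orbit of the cyclic group $\langle s\rangle=\langle r\rangle$ (taken modulo $p$) acting on $\mathcal C_p(G)$ via $C\mapsto C^k$. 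So the partial augmentations of $u$ are constant on the orbits of the permutation $C\mapsto C^r$.

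Now $C\mapsto C^r$ is a permutation of $\mathcal C_p(G)$ whose $q$-th power is $C\mapsto C^{r^q}=C$, since $r^q\equiv1\pmod p$. Hence every orbit has length $1$ or $q$. Suppose no class is fixed. Then all orbits have length $q$, and by orbitwise constancy the sum in \eqref{eq:support} is $q$ times an integer. It therefore cannot equal $1$, a contradiction. So some $C=a^G$ with $|a|=p$ satisfies $C^r=C$.

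Finally, I translate this fixed class into an arc of $\GN(G)$. The relation $a^r=g^{-1}ag$ holds for some $g\in G$, so $g$ normalizes $\langle a\rangle$ and induces the automorphism $a\mapsto a^r$, whose order is $\operatorname{ord}_p(r)=q$. Hence the image of $g$ in $N_G(\langle a\rangle)/C_G(a)\le\Aut(\langle a\rangle)$ has order $q$. Consequently $q$ divides $|N_G(\langle a\rangle)|$, and by Cauchy the normalizer contains an element $y$ of order $q$. Since $p\ne q$, this gives the arc $q\to p$ in $\GN(G)$; the definition requires only that the normalizer contain such an element, with no centralizing condition. I expect no step to be a real obstacle. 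The point needing most care is the first one: checking that conjugacy by $v$ (a unit of $\mathbb ZG$) legitimately invokes \eqref{eq:conjugation}, and that the power map on classes correctly has orbit lengths dividing $q$.
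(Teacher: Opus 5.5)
Your proposal is correct and follows essentially the same route as the paper. Conjugation invariance and Lemma~\ref{lem:power} give orbitwise constancy of the partial augmentations of $u$; orbits of size $1$ or $q$ together with the augmentation sum $1$ force a fixed class $a^G$; and the induced automorphism $a\mapsto a^r$ of order $q$ plus Cauchy's theorem in $N_G(\langle a\rangle)$ gives the arc. The only difference is cosmetic: you argue by contradiction, whereas the paper states the congruence modulo $q$ over the fixed classes directly.
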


\begin{proof}
Let
\[
 \rho\colon\mathcal C_p(G)\longrightarrow\mathcal C_p(G),
 \qquad \rho(C)=C^r.
\]
Since $\gcd(r,p)=1$, the map $\rho$ is a permutation. Conjugation
invariance of partial augmentations and Lemma~\ref{lem:power} give
\[
 \eps_C(u)
 =
 \eps_C(v^{-1}uv)
 =
 \eps_C(u^r)
 =
 \eps_{\rho^{-1}(C)}(u)
 \qquad(C\in\mathcal C_p(G)).
\]
Thus the partial augmentations of $u$ are constant on the $\rho$-orbits.
Moreover, $r^q\equiv1\pmod p$ implies $\rho^q=\operatorname{id}$. Hence
every $\rho$-orbit has size dividing the prime $q$, and therefore has size
$1$ or $q$. By \eqref{eq:support},
\[
 1
 =
 \sum_{C\in\mathcal C_p(G)}\eps_C(u)
 \equiv
 \sum_{\substack{C\in\mathcal C_p(G)\\ C^r=C}}
 \eps_C(u)
 \pmod q.
\]
In particular, the power map has a fixed class.

Let $a^G$ be such a class. Since $a^r$ is conjugate to $a$, choose $x\in G$
with $x^{-1}ax=a^r$. Since $a^r$ generates $\langle a\rangle$, we have
$x\in N_G(\langle a\rangle)$. Under the standard conjugation homomorphism
\[
 N_G(\langle a\rangle)\longrightarrow
 \Aut(\langle a\rangle),
 \qquad y\longmapsto(t\longmapsto yty^{-1}),
\]
the element $x^{-1}$ induces $a\mapsto a^r$. Its image therefore has order
$\operatorname{ord}_p(r)=q$. Thus
\[
 q\mid\bigl|N_G(\langle a\rangle)/C_G(a)\bigr|,
\]
and in particular $q\mid |N_G(\langle a\rangle)|$. Cauchy's theorem now
gives an element of order $q$ in this normalizer. Thus $q\to p$ is an arc
of $\GN(G)$.
\end{proof}

\begin{remark}
The element of order $q$ obtained from Cauchy's theorem need not induce the
automorphism $a\mapsto a^r$; it may centralize $\langle a\rangle$. Thus
Proposition~\ref{prop:transfer} transfers the $N$-prime arc, but not a
prescribed subgroup $C_p\rtimes C_q$.
\end{remark}

\section{The exact arc formula}

\begin{proof}[Proof of Theorem~\ref{thm:main}]
The inclusion $G\leq\VZG$ gives
\[
 A(\GN(G))\subseteq A(\GN(\VZG)).
\]
If $\{p,q\}\in E(\GGK(\VZG))$, the $p$- and $q$-parts of a unit of order
$pq$ commute and give both arcs $q\to p$ and $p\to q$. Thus the
right-hand side of \eqref{eq:identity} is contained in its left-hand side.

Conversely, let $q\to p$ be an arc of $\GN(\VZG)$. Let
$P=\langle u\rangle$ be the witnessing subgroup of order $p$, and let $v$
be a witnessing element of order $q$. Write $v^{-1}uv=u^r$, where
\mbox{$\gcd(r,p)=1$}. If $r\equiv1\pmod p$, then $u$ and $v$ commute, and their
product has order $pq$. Hence $\{p,q\}$ is an edge of $\GGK(\VZG)$.
Otherwise,
$v^q=1$ gives $r^q\equiv1\pmod p$. The multiplicative order
$\operatorname{ord}_p(r)$ therefore divides the prime $q$ and is not $1$,
so it equals $q$. Proposition~\ref{prop:transfer} then shows that $q\to p$
already occurs in $\GN(G)$. This proves \eqref{eq:identity}.

Suppose first that PQ holds for $G$. Then
\[
 E(\GGK(\VZG))=E(\GGK(G)).
\]
Every edge of $\GGK(G)$ yields both $N$-prime arcs in $G$, and substitution
in \eqref{eq:identity} gives
\[
 A(\GN(\VZG))=A(\GN(G)).
\]
The common vertex set now gives $\GN(\VZG)=\GN(G)$, so NPQ holds for $G$.
Conversely, if NPQ holds for $G$, Lemma~\ref{lem:double} recovers equal
Gruenberg--Kegel graphs from the equal sets of double arrows. This proves
\eqref{eq:equivalence}.
\end{proof}

\section*{Acknowledgements}

The author is grateful to \'Angel del R\'io and Emanuele Pacifici for
carefully reading an earlier version of the manuscript, and for their
helpful feedback and generous encouragement.

\section*{Declaration on the use of automated tools}

OpenAI Codex and ChatGPT were used to assist with literature searches,
manuscript organization, language editing, and \LaTeX{} preparation.
The author verified the mathematical arguments and bibliographic
information and assumes full responsibility for the final text.

\end{document}